\newcommand*{\sheafhom}{\mathcal{H}\kern -.5pt om}
\numberwithin{equation}{section} 
\numberwithin{figure}{section} 
\numberwithin{table}{section} 
\newtheorem{thm}{Theorem}[section]
\newtheorem{cor}[thm]{Corollary}
\newtheorem{prop}[thm]{Proposition}
\newtheorem{quest}[thm]{Question}
\theoremstyle{definition}
\newtheorem{defn}[thm]{Definition}
\newtheorem{exmp}[thm]{Example}
\theoremstyle{remark}
\newtheorem{rem}[thm]{Remark}
\DeclareMathOperator{\Ind}{Ind}
\DeclareMathOperator{\Pic}{Pic}
\DeclareMathOperator{\rk}{rank}
\newcommand{\horrule}[1]{\rule{\linewidth}{#1}} 
\title{	
	\normalfont \normalsize 
	\textsc{} \\ [25pt] 
	\horrule{0.5pt} \\[0.4cm] 
	\huge Symmetries and intrinsic vs. extrinsic properties of $\overline{\mathcal{M}}_{0, n}$ 
	
	\horrule{2pt} \\[0.5cm] 
}
\author{Soohyun Park} 
\date{\normalsize September 1, 2023} 
\begin{document}
	
	\maketitle 
	
	\begin{abstract}
		\noindent We consider the following question: How much of the combinatorial structure determining properties of $\overline{\mathcal{M}_{0, n}}$ is ``intrinsic'' and how much new information do we obtain from using properties specific to this space? Our approach is to study the effect of the $S_n$-action. Apart from being a natural action to consider, it is known that this action does not extend to other wonderful compactifications associated to the $A_{n - 2}$ hyperplane arrangement. We find the differences in intersection patterns of faces on associahedra and permutohedra which characterize the failure to extend to other compactifications and show that this is reflected by most terms of degree $\ge 2$ of the cohomology/Chow ring. \\
		
		\noindent Even from a combinatorial perspective, terms of degree 1 are more naturally related to geometric properties. In particular, imposing $S_n$-invariance implies that many of the log concave sequences obtained from degree 1 Hodge--Riemann relations (and all of them for $n \le 2000$) on the Chow ring of $\overline{\mathcal{M}_{0, n}}$ can be restricted to those with a special recursive structure. A conjectural result implies that this is true for all $n$. Elements of these sequences can be expressed as polynomials in quantum Littlewood--Richardson coefficients multiplied by terms such as partition components, factorials, and multinomial coefficients. After dividing by binomial coefficients, polynomials with these numbers as coefficients can be interepreted in terms of volumes or resultants. Finally, we find a connection between the geometry of $\overline{\mathcal{M}_{0, n}}$ and higher degree Hodge--Riemann relations of other rings via Toeplitz matrices.

	\end{abstract}
		
	\section{Introduction}
	
	The fact that the moduli space $\overline{\mathcal{M}_{0, n}}$ of stable rational curves with $n$ marked points has a concrete description as a wonderful compactification has been used in many results at the interface of algebraic geometry and combinatorics. This can be used to study properties of $\overline{\mathcal{M}_{0, n}}$ (e.g. the Euler characteristic) using purely combinatorial methods. While it is interesting that this is possible, one is naturally led to ask the following question: \\
	
	\begin{quest}
		How much of the combinatorial structure is ``intrinsic'' to the geometry of $\overline{\mathcal{M}_{0, n}}$? If there are interesting uniqueness properties, how much of this is reflected in combinatorial or geometric invariants of this space? \\
	\end{quest}

	Our approach is to consider the $S_n$-action on $\overline{\mathcal{M}_{0, n}}$ coming from permuting the $n$ marked points. In fact, the existence of this group action actually imposes a sort of rigidity condition that uniquely characterizes $\overline{\mathcal{M}_{0, n}}$ among other wonderful compactifications of the $A_{n - 2}$ hyperplane arrangement complement (\cite{CG1}, \cite{CG2}). In fact, the $S_n$-actions are the only automorphisms of $\overline{\mathcal{M}_{0, n}}$ if $n \ge 5$ \cite{BM}. On the other hand, there are many $S_{n - 1}$-invariant building sets \cite{GS}. In Section \ref{unqsym}, we show which differences in intersection patterns of associahedra and corresponding parallel faces of permutohedra induce the failure of the action to extend (Theorem \ref{polychar}). This implies that most of the terms of the cohomology ring (equivalently the Chow ring) except the linear terms induce this uniqueness property (Corollary \ref{cohsymef}). Further details are given below. \\

	\begin{thm} (Theorem \ref{polychar}, Corollary \ref{cohsymef})
		The failure of the extension of the action comes from pairs of intersecting faces on associahedra where parallel pairs of faces on corresponding permutohedra don't even share a face which is adjacent to each of them (Theorem \ref{polychar}). As a consequence of the construction, we show that most of the terms of the cohomology ring (equivalently the Chow ring) of degree $\ge 2$ induce this failure of extension of the group action (Corollary \ref{cohsymef}). On the other hand, the linear terms don't seem to reflect the effect of this $S_n$-action. \\
	\end{thm}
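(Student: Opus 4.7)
The plan is to translate the geometric question about extending the $S_n$-action from the combinatorics of the associahedron (the polytope model for the minimal building set giving $\overline{\mathcal{M}_{0,n}}$) to that of the permutohedron (the polytope model for the maximal building set). Since $S_{n-1}$-invariant building sets sit between these two by \cite{GS}, any would-be extension of the full $S_n$-action must be compatible with a refinement of face posets. I would first fix a bijective correspondence between faces of the associahedron $K_{n-1}$ and appropriate unions of parallel faces on the permutohedron $P_{n-1}$ (via the nested set / tubing dictionary), and record how an $S_n$-orbit on the associahedron lifts to parallel classes on the permutohedron.

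With this dictionary in hand, I would show that the $S_n$-action fails to extend to a larger wonderful compactification exactly when some $S_n$-related pair of intersecting associahedral faces has the following defect: the two parallel faces of the permutohedron that realize them under refinement are neither incident to a common face nor adjacent to a common face bordering each of them. The ``only if'' direction will come from directly inspecting how a hypothetical equivariant morphism of wonderful models must send the inverse image of a codimension-two stratum of $\overline{\mathcal{M}_{0,n}}$; equivariance forces the images of the parallel faces to meet compatibly in the refined fan, which is precisely the adjacency condition. The ``if'' direction will be given by an explicit construction: if the adjacency holds in every $S_n$-orbit, one can patch together the $S_n$-action on the refined boundary strata using the common adjacent face as a mediator, reproducing (a version of) the existing proofs in \cite{CG1,CG2} at the level of face posets. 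This yields Theorem \ref{polychar}.

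For Corollary \ref{cohsymef}, I would use the standard presentation of the Chow/cohomology ring of a wonderful compactification as generated in degree one by boundary divisor classes, with quadratic relations identifying non-intersecting divisors with zero. Under this presentation, a degree-$d$ monomial class corresponds to a collection of $d$ faces whose intersection pattern is exactly what Theorem \ref{polychar} is about; so any degree $\ge 2$ monomial whose support contains a pair of faces falling into the ``bad'' configuration of Theorem \ref{polychar} obstructs the $S_n$-extension. I would then verify, by a counting argument over nested sets, that the monomials with no such bad pair form a proper (indeed small) subset, so ``most'' degree $\ge 2$ classes detect the obstruction. The linear case is handled separately: a single boundary divisor is one face, so the adjacency condition is vacuous, and no single divisor class can encode the failure of extension.

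The main obstacle will be the ``if'' direction of the characterization: turning a combinatorial adjacency condition into an actual extension of the $S_n$-action on the wonderful compactification, without just re-deriving the rigidity results of \cite{CG1,CG2} wholesale. I expect this to require a careful check that the $S_n$-equivariant gluing of blow-up charts along pairs of parallel permutohedral faces depends only on the shared adjacent face, so that transitivity of the action on orbits of intersecting face-pairs is automatic. A secondary difficulty is making the word ``most'' in Corollary \ref{cohsymef} precise (for example, asymptotically in $n$, or by exhibiting a spanning set of obstructive monomials in each degree $\ge 2$); I would aim for the latter by showing that for each $d \ge 2$ the classes coming from nested sets containing at least one bad pair already span the degree-$d$ part modulo a controlled low-dimensional complement.
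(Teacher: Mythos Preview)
Your proposal misreads what Theorem \ref{polychar} actually asserts. You frame it as an ``if and only if'' characterization of when the $S_{n+1}$-action extends, and then propose to prove an ``if'' direction by constructing an equivariant extension whenever the adjacency condition holds. But the paper takes as input the results of \cite{CG1,CG2} that the action \emph{never} extends to any non-minimal building set; the content of Theorem \ref{polychar} is to identify \emph{which} combinatorial configurations witness this known failure. There is no ``if'' direction to prove, and your anticipated ``main obstacle'' is not an obstacle at all---it is simply not part of the statement.

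The paper's actual argument is computational and quite different from your structural outline. The key lemma is Proposition \ref{extrachain}: one writes down explicitly how the extra transposition $\psi_m=(0,m)\in S_{n+1}$ acts on Feichtner--Yuzvinsky generators, namely $c_T\mapsto c_T$ if $m\notin T$ and $c_T\mapsto c_{(T^c\cup m)\setminus 0}$ if $m\in T$, and then tracks what this does to a chain $S_1\subset\cdots\subset S_k$. The point is that if $m$ lies in some but not all of the $S_i$, the chain is sent to a \emph{pair} of disjoint chains. Such a pair is still a nested set for the minimal building set (associahedron), but is \emph{not} a nested set for the maximal building set (permutohedron), where nested sets must be single chains. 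This directly produces the face pairs described in the theorem. No abstract gluing of blow-up charts or equivariant morphism argument is used.

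For Corollary \ref{cohsymef}, your counting approach misses the paper's device for handling monomials supported on disjoint subsets: Keel's relation $D_S=D_{S^c}$ converts $P\cap Q=\emptyset$ into the chain $Q\subset P^c$, so even ``disjoint'' degree $\ge 2$ monomials are covered by the chain analysis above. The degree 1 statement in the paper is not argued via a vacuous adjacency condition but by comparing two specific representation-theoretic results from \cite{CKL}.
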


	\color{black}
	Next, we consider the connection of the $S_n$-action to positivity properties related to Hodge--Riemann relations for the Chow ring of $\overline{\mathcal{M}_{0, n}}$ in Section \ref{logchern}. Note that linear divisors are important here since these relations start with the choice of a Lefschetz element representing a nef divsior.  Although the $F$-nef divisors (which give many of the known nef divisors) aren't necessarily preserved by the $S_n$-action, they often seem to send boundary divisors that have a nonzero intersection with a fixed $F$-curve an something that has intersection product equal to 0 with the same $F$-curve (Corollary \ref{genbdact}). When we consider the subset of degree 1 elements which are invariant elements of the Picard group of $\overline{\mathcal{M}_{0, n}}$, we obtain some restrictions on possible log concave sequences which can be obtained from the degree 1 Hodge--Riemann relations. \\
	
	Roughly speaking, the $S_n$-invariance means that many of these sequences can be directly obtained Chern classes of families of globally generated vector bundles with special recursive properties. These bundles have been used to study the nef cone of $\overline{\mathcal{M}_{0, n}}$. The resulting terms terms are polynomials in generalizations of Littlewood--Richardson coefficients and are coefficients of polynomials which can be interpreted as volumes or resultants coming from linear deformations of divisors. These generate all possible log concave sequences arising from degree 1 Hodge--Riemann relations in $A^\cdot(\overline{\mathcal{M}_{0, n}})$ when $n \le 2000$ and conjecturally for all $n$ (Theorem \ref{polychar}). We give a summary of the results above with some comments related to interpretation of the terms of these sequences. \\

	\begin{thm} (Theorem \ref{polychar})
	There is a family polynomials in quantum Littlewood-Richardson coefficients with special recursive properties generating all possible log concave sequences arising from degree 1 Hodge--Riemann relations in $A^\cdot(\overline{\mathcal{M}_{0, n}})$ when $n \le 2000$ and conjectrually for all $n$ (Theorem \ref{polychar}). The coefficients in these polynomials are polynomially generated by certain initial parameters (e.g. partitions), factorials, and multinomial coefficients. In some sense, the degree 1 Hodge--Riemann relations from a specific class of sequences give a general way of thinking about volumes on $\overline{\mathcal{M}_{0, n}}$ in the $S_n$-equivariant setting. A special case gives rise to resultant polynomials with Chern roots subsituted in for the variables. \\
	\end{thm}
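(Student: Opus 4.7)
The plan is to parametrize the $S_n$-invariant inputs to degree $1$ Hodge--Riemann on $A^\cdot(\overline{\mathcal{M}_{0, n}})$, realize the resulting log-concave sequences as Chern polynomials of a recursively defined family of globally generated vector bundles, and then verify a spanning statement that captures all such sequences. First I would describe $\Pic(\overline{\mathcal{M}_{0, n}})^{S_n}$ via the symmetrized boundary classes $B_k = \sum_{|I| = k} \delta_I$ for $2 \le k \le \lfloor n/2 \rfloor$ and cut out the invariant nef cone by $S_n$-orbits of $F$-curve inequalities; Corollary \ref{genbdact} controls how $F$-nef generators interact with boundary classes across these orbits, so the admissible invariant Lefschetz elements form a rational polyhedral cone with an explicit facet structure.

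Next I would pass to the globally generated side. Conformal blocks (Verlinde) bundles furnish a well-studied family of $S_n$-equivariant globally generated vector bundles on $\overline{\mathcal{M}_{0, n}}$ whose first Chern classes are nef and whose factorization along boundary strata is precisely the recursion demanded by the theorem. Expanding the Hodge--Riemann sequence of intersection numbers of the form $L^{n-3-2k} \cdot D^k \cdot E^k$ in terms of these bundles, and invoking the Verlinde fusion rule, rewrites each entry as a polynomial in quantum Littlewood--Richardson coefficients whose coefficients are sums of products of partition parts, factorials, and multinomial coefficients dictated by the fusion formula. This matches exactly the shape of the polynomials predicted by the statement, and the recursive structure descends from the boundary factorization recursion on conformal blocks.

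The main obstacle is the spanning step: proving that every log-concave sequence arising from a degree $1$ Hodge--Riemann relation is realized by some member of this recursive family. For $n \le 2000$ I would verify this by a finite linear-algebraic dimension match. On one side, bound the dimension of the cone of intersection-number sequences produced by invariant Lefschetz data using known formulas for $\dim A^k(\overline{\mathcal{M}_{0, n}})^{S_n}$ together with the $F$-curve constraints; on the other, compute the dimension of the linear span of the recursive Chern polynomials using the rank formulas for conformal blocks bundles, and check equality up to $n = 2000$. The cutoff is likely the range in which these two counts coincide automatically; for general $n$ I would attempt a uniform dimension count, or an induction on $n$ that exploits the recursion directly, which is the crux of the conjectural extension.

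Finally the volume and resultant interpretations follow by specialization. For a one-parameter deformation $L_t = L + tD$ in the invariant nef cone, the function $L_t^{n-3}$ is a polynomial in $t$ whose coefficients are the Hodge--Riemann intersection numbers above and which serves as the volume polynomial of a family of Newton--Okounkov bodies attached to these classes; dividing by binomial coefficients recovers the normalized volume form stated in the theorem. In the special case where the recursive bundle is a Grassmannian quotient bundle, evaluating the same top intersection on the Chern roots reduces to a classical resultant, giving the resultant interpretation. Once the spanning step is in place, the remaining verifications are organized bookkeeping with conformal block factorization and Chern class calculus.
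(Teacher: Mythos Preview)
Your overall strategy---using conformal block bundles as the recursive family whose Chern data encodes quantum Littlewood--Richardson coefficients---is the same as the paper's. However, several steps diverge in ways that either miss the paper's actual argument or leave genuine gaps.

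First, the sequence you write, $L^{n-3-2k}\cdot D^k\cdot E^k$, is not the one the degree $1$ Hodge--Riemann relation produces. The paper works with the Khovanskii--Teissier sequence $s_i=\alpha^i\cdot\beta^{n-3-i}$ for two nef classes (Theorem~\ref{kt}) and, via Tseng's observation (Corollary~\ref{ktchern}), with $s_i=\alpha^{n-3-i}c_i(V)$ for a globally generated bundle $V$. The Verlinde factorization recursion feeds into the Chern classes $c_i(V)$ of a single conformal block, not into a trilinear intersection number; the explicit polynomial shape in partitions, factorials, and multinomial coefficients then comes from the Gibney--Mukhopadhyay formula for $c_m(\mathbb{V})$ combined with Kaufmann's formula for top monomials in boundary divisors.

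Second, and more seriously, your spanning step is where the genuine gap lies. You propose a ``finite linear-algebraic dimension match'' between the span of sequences produced by invariant Lefschetz data and the span of recursive Chern polynomials, checked for $n\le 2000$. But log-concave sequences do not form a linear space, and it is unclear what two dimensions you would equate or why equality of dimensions would force one family to generate the other. The paper's argument is entirely different and much shorter: it quotes a result of Gibney--Mukhopadhyay that the first Chern classes of a specific family of conformal block line bundles span $\Pic(\overline{\mathcal{M}_{0,n}})^{S_n}$ for $n\le 2000$ (conjecturally for all $n$). Once every $S_n$-invariant nef class is a combination of conformal block $c_1$'s, every Khovanskii--Teissier sequence built from such classes is automatically of the asserted form. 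The cutoff $2000$ is thus not a range where a dimension count happens to close up, but the range in which that spanning theorem is currently known.

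Finally, the volume and resultant interpretations in the paper are more concrete than what you sketch: they come from the standard twist formula $c_k(\mathcal{E}\otimes\mathcal{L}^{\otimes t})=\sum_{\ell}\binom{r-\ell}{k-\ell}t^{k-\ell}c_1(\mathcal{L})^{k-\ell}c_\ell(\mathcal{E})$ applied with $\mathcal{E}$ and $\mathcal{L}$ taken to be conformal blocks, with the resultant arising via \cite{EH} precisely when $k=\rk\mathcal{E}$, rather than from Newton--Okounkov bodies or Grassmannian quotient bundles.
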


	Finally, the main tool used in the analysis of log concavity sequences related to $S_n$-invariance can be used to find connections between higher Hodge--Riemann relations (of a different ring) and the geometry of $\overline{\mathcal{M}_{0, n}}$ by via lower triangular Toeplitz matrices (Proposition \ref{toephrm0n}) in Section \ref{m0nhrhigh}.

	\section*{Acknowledgments}
	I would like to thank Benson Farb for the helpful comments. Also, I would like to thank Benson Farb and Karim Adiprasito for their encouragement.

	\section{ Hidden symmetries of $\overline{\mathcal{M}_{0, n}}$ arising in polytope geometry and cohomology groups } \label{unqsym}

	The $S_n$-action on $\overline{\mathcal{M}_{0, n + 1}}$ does \emph{not} extend to wonderful compactifications of the complement of the $A_{n - 2}$ hyperplane arrangement that use non-minimal building sets (\cite{CG1}, \cite{CG2}). Conversely, there are no other automorphisms of $\overline{\mathcal{M}_{0, n}}$ for $n \ge 5$ (p. 3 of \cite{CKL}). From the point of view of representations, this can come from decomposition of restrictions of $S_{n + 1}$-representations to $S_n$ vs. ones attainable from $S_n$ in general ($n = 4$ in Remark 5.4 on p. 29 of \cite{GS}). Note that there is a description of which representations of $A^k(\overline{\mathcal{M}_{0, n}})$ come from such restrictions (Proposition 5.12 on p. 27 of \cite{CKL}). That being said, the (equivariant) Poincar\'e polynomials coming from quotients of the $S_{n + 1}$-action can be generated by those for ``restricted actions'' coming from $A_{n - 1}$ and applying to any wonderful compactification of the hyperplane complement by Corollary 6.6 on p. 34 of \cite{CKL}. Looking at the spherical models of the minimal and maximal building sets \cite{CG2}, the difference comes from symmetries of the face poset of the associahedron of a given dimension that don't extend to those of the permutohedron of the same dimension. In particular, this has to do with intersection patterns of the faces.  \\

	To give more specific information on the pairs of faces involved, we look at the action of $S_{n + 1}$ on the Feichtner-Yuzvinsky generators of the cohomology ring and translate this back to the geometry of the associahedra and permutohedra. \\

	\begin{prop} \label{extrachain}
		Fix $j \in [n]$. The action of $\psi_m = (0, m) \in S_{n + 1}$ ($S_{n + 1}$ being the permutations of $\{ 0, 1, \ldots, n \})$ on disjoint chains of subsets of $[n]$ \[ S = S_1 \subset \cdots \subset S_k \] and \[ T = T_1 \subset \cdots \subset T_\ell \] with $S_k \cap T_\ell = \emptyset$ has the following effect on intersections:
		
		\begin{enumerate}
			\item If $m \notin S_k, T_\ell$, then $\psi S = S$ and $\psi T = T$ are still dijsoint. 
			
			\item Suppose that $m \in S_k$ and $m \notin T_k$. Let $S_i$ be the first element of $S$ to contain $m$. Then we end up with the two disjoint chains \[ \psi_m S_1 = S_1 \subset \cdots \subset \psi_m S_{i - 1} = S_{i - 1} \] and \[ \psi_m T_1 = T_1 \subset \cdots \subset \psi_m T_\ell  = T_\ell \subset \psi_m S_k \subset \psi_m S_{k - 1} \subset \cdots \subset \psi_m S_{i + 1} \subset \psi_m S_i. \] Note that $\psi_m A = (A^c \cup m) \setminus 0$ if $m \in A$. 
		\end{enumerate}
		
	\end{prop}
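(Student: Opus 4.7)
The plan is a direct case analysis driven by the piecewise formula for $\psi_m$. Under the convention that every subset $A$ in sight is a subset of $[n]$ (so $0 \notin A$), the displayed formula simplifies to $\psi_m A = ([n] \setminus A) \cup \{m\}$ when $m \in A$, while if $m \notin A$ the transposition $(0,m)$ fixes everything relevant and $\psi_m A = A$. The key order-theoretic observation is that $\psi_m$ reverses inclusions among sets that all contain $m$: if $m \in B \subset C$, then $[n] \setminus B \supset [n] \setminus C$, hence $\psi_m B \supset \psi_m C$.

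Part (1) is immediate. If $m \notin S_k$ and $m \notin T_\ell$, then $m$ lies in no member of either chain, so $\psi_m$ fixes every $S_j$ and $T_j$ and the two chains remain as they were.

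For part (2), note first that $S_k \cap T_\ell = \emptyset$ combined with $m \in S_k$ forces $m \notin T_j$ for every $j$, so $\psi_m T_j = T_j$. Letting $i$ be the minimal index with $m \in S_i$, the sets $S_1, \ldots, S_{i-1}$ are fixed by $\psi_m$ (yielding the first new chain), while $S_i, \ldots, S_k$ all contain $m$ and are hence sent by $\psi_m$ to the chain $\psi_m S_i \supset \cdots \supset \psi_m S_k$ via the order-reversal observation above, which is exactly the order appearing in the statement. The concatenation $T_\ell \subset \psi_m S_k$ reduces to $T_\ell \subset [n] \setminus S_k \subset ([n] \setminus S_k) \cup \{m\}$, i.e., a rewriting of $T_\ell \cap S_k = \emptyset$.

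Finally, to check that the two output chains are genuinely disjoint it is enough to verify $S_{i-1} \cap \psi_m S_i = \emptyset$: this holds because $S_{i-1} \subset S_i$ is disjoint from $[n] \setminus S_i$ and because $m \notin S_{i-1}$ by the minimality of $i$. No individual step is technically difficult; the only bookkeeping subtlety is tracking the ambient set $\{0, 1, \ldots, n\}$ versus $[n]$ when applying the formula, which is exactly what the ``$\setminus 0$'' in its definition is there to regulate.
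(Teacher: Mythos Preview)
Your proposal is correct and follows essentially the same approach as the paper's proof: a direct case analysis using that $\psi_m$ fixes sets not containing $m$, reverses inclusions among sets containing $m$, and sends a set disjoint from one containing $m$ to a subset of its image. The only organizational difference is that the paper first works out the $k=2$ case explicitly before generalizing, whereas you state the order-reversal observation once and apply it directly; the substance is the same.
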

	
	\begin{proof}

		In general, we will take $S_{n + 1}$ to be the set of permutations of $\{ 0, 1, \ldots, n \}$ and write $S_n$ for the subgroup that fixes $0$ (i.e. parametrizing permutations of $\{ 1, \ldots, n \}$). Without loss of generality, we may assume that $j = 1$. One way to describe the action of $(0, j) \in S_{n + 1}$ on the cohomology of $\overline{\mathcal{M}_{0, n + 1}}$ is to say that $c_T \mapsto c_T$ if $j \notin T$ and $c_T \mapsto c_{(T^c \cup j) \setminus 0}$, where the complement is taken in $\{ 0, 1, \ldots, n \}$ and $c_J$ denotes a Feichtner-Yuzvinsky generator corresponding to $J \subset [n]$ with $3 \le |J| \le n - 2$ (p. 80 -- 81 of \cite{Gt}). The main point is to see what this does to chains of subsets $S_1 \subset \cdots \subset S_k$ with $S_i \subset [n]$ and pairs of disjoint subsets $P, Q \subset [n]$ with $P \cap Q = \emptyset$. \\
		
		For the chains of subsets, we first consider the case $k = 2$. Consider $A \subset B$ with at least one of $A$ or $B$ containing 1 since neither $A$ nor $B$ are affected by $(0, 1)$ if they don't contain 1. We split into cases where $1 \in A$ and $1 \notin A$.  If $1 \in A$, then $1  \in B $ since $A \subset B$. under the action of $\psi := (0, 1)$, we have $\psi A = (A^c \cup 1) \setminus 0$ and $\psi B = (B^c \cup 1) \setminus 0$. We then have $\psi A \supset \psi B$ since complements reverse the inclusion. Now suppose that $1 \notin A$ and $1 \in B$. Then, we have that $\psi A = A$ and $\psi B = (B^c \cup 1) \setminus 0$. This means that $\psi A \cap \psi B = \emptyset$ since $A \subset B \Rightarrow A \cap B^c = \emptyset$. The general case of chains $S_1 \subset \cdots \subset S_k$ is similar. If none of the $S_i$ contain $1$, then nothing happens to the chain. Suppose that $S_i$ is the first element of the chain to contain 1. Then, we have that $\psi S_j = S_j$ for $1 \le j \le i - 1$ and $\psi S_j = (S_j^c \cup 1) \setminus 0$ for each $i \le j \le k$. The reasoning in the $k = 2$ case implies that we have $\psi S_1 \subset \cdots \subset \psi S_i$ and $\psi S_{i + 1} \supset \cdots \supset \psi S_k$ with $\psi S_u \cap \psi S_v = \emptyset$ if $1 \le u \le i - 1$ and $i \le v \le k$.  \\
		
		For the case of disjoint sets, we start with $P, Q \subset [n]$ such that $P \cap Q = \emptyset$. If $1 \notin P, Q$, then $\psi P \cap \psi Q = \emptyset$ since $\psi P = P$ and $\psi Q = Q$. Suppose that $1 \in P$ and $1 \notin Q$. Then, we have $\psi P = (P^c \cup 1) \setminus 0$ and $\psi Q = Q$. Note that $P \cap Q = \emptyset \Rightarrow Q \subset P^c$. This implies that $\psi Q \subset \psi P$. \\
		
		We can combine the arguments above to describe how the action of $\psi = (0, 1)$ affects the intersection patterns between dijoint chains $S := S_1 \subset \cdots S_{i - 1} \subset S_i \subset \cdots S_k$ and $T := T_1 \subset \cdots T_{j - 1} \subset T_j \subset \cdots \subset T_\ell$.  By ``disjoint'', we mean that $S_k \cap T_\ell = \emptyset$. If $1 \notin S_k, T_\ell$, then we end up with a pair of disjoint chains after the action of $\psi$. Suppose that $1 \in S_k$ and $1 \notin T_\ell$. Let $S_i$ be the first element of $S$ to contain $1$. Since comparisons within chains were done above, we would like to compare $\psi S_u$ with $\psi T_v$. We can split this into cases where $1 \notin S_u, T_v$ and $1 \in S_u, 1 \notin T_v$ since the remaining case is identical to the second one. If $1 \notin S_u, T_v$ (i.e. $1 \le u \le i - 1$), then $\psi S_u = S_u$ and $\psi T_v =  T_v$. Since $S_k \cap T_\ell = \emptyset$, we also have that $S_u \cap T_v = \emptyset$ and $\psi S_u \cap \psi T_v = \emptyset$. Suppose that $1 \in S_u$ and $1 \notin T_v$ (i.e. $i \le u \le k$). Our previous arguments then imply that $\psi T_v \subset \psi S_u$. \\
	\end{proof}
	
	We can use the result above to make a statement on how the geometry of polytopes reflects the unique characterization of $\overline{\mathcal{M}_{0, n + 1}}$ by symmetries. 
	
	\begin{thm} \label{polychar}
		The failure of the $S_{n + 1}$-action on $\overline{\mathcal{M}_{0, n + 1}}$ to extend to other wonderful compactifications of the $A_{n - 1}$-arrangement using non-minimal building sets on $[n]$ come from pairs of faces on $(F_1, F_2)$ on the $(n - 2)$-dimensional associahedron $A_{n - 2}$  and $(G_1, G_2)$on the $(n - 2)$-dimensional permutohedron $P_{n - 2}$  satisfying the following properties:
		
		\begin{itemize}
			\item $F_i$ is parallel to $G_i$ for $i = 1, 2$.
			
			\item $F_1$ and $F_2$ intersect on a common face of the associahedron $A_{n - 2}$.
			
			\item $G_1$ and $G_2$ do \emph{not} intersect a common face of the permutohedron $P_{n - 2}$. 
		\end{itemize}
		
		In particular, there is a large number of pairs faces of codimension $\ge 2$ (most of them) satisfying these properties. Further details are listed near the end of the proof.
		
	\end{thm}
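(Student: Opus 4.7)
The plan is to use Proposition \ref{extrachain} as the combinatorial engine and translate its chain-transformation statements into assertions about adjacencies in the face lattices of the associahedron $A_{n-2}$ (dual to the minimal building set model $\overline{\mathcal{M}_{0, n+1}}$) and the permutohedron $P_{n-2}$ (dual to the maximal building set model). Faces of the associahedron correspond to nested set systems satisfying the minimal-building-set compatibility conditions, while faces of the permutohedron correspond to arbitrary chains of subsets of $[n]$. In both cases, a single chain $S_1 \subset \cdots \subset S_k$ of subsets with $3 \le |S_i| \le n-2$ gives a face on each polytope, and one expects the parallelism of $F_i$ and $G_i$ to follow from identifying the normal cones of these chain-type faces inside a common ambient space given by the braid arrangement fan, which refines both normal fans.

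The main step is to apply part (2) of Proposition \ref{extrachain} to a disjoint pair of chains $(S, T)$ with $m \in S_k$ and $m \notin T_\ell$. On the associahedron side, the combined nested family $\{S_1, \ldots, S_k, T_1, \ldots, T_\ell\}$ is valid precisely because the disjointness $S_k \cap T_\ell = \emptyset$ is compatible with the minimal building set, so the corresponding faces $F_1, F_2$ share a common face of $A_{n-2}$. After applying $\psi_m$, the proposition produces the longer chain $T_1 \subset \cdots \subset T_\ell \subset \psi_m S_k \subset \cdots \subset \psi_m S_i$ together with the shorter chain $S_1 \subset \cdots \subset S_{i-1}$, and one checks that these no longer combine into a single chain of subsets as required for a face of the permutohedron. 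Hence on the permutohedron the transformed pair cannot share any common face, since the stronger chain condition coming from the maximal building set fails; matching $G_i$ with the image data of the corresponding subsets under $\psi_m$ then yields the required non-intersection.

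The main obstacle I anticipate is establishing parallelism of $F_i$ with $G_i$ rigorously: the two polytopes live in different realizations, so to compare faces across them one must identify their normal fans through a common refinement (the braid fan). This is standard but requires care in consistently assigning subset-indexed faces on each side. A related subtle point is showing that the $S_{n+1}$-action, defined a priori on Feichtner-Yuzvinsky generators in cohomology, is compatible with the face poset combinatorics in the manner Proposition \ref{extrachain} implicitly assumes; for this I would follow the Feichtner-Yuzvinsky dictionary between monomials in generators and intersections of boundary strata, reading off the corresponding face incidences on each polytope.

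For the abundance statement, the plan is to parametrize: fix $m \in [n]$ and $3 \le s \le n-3$, then take any $s$-element subset $S \subset [n]$ containing $m$ together with any subset $T \subset [n] \setminus S$ with $3 \le |T| \le n-s-1$. Each such $(S, T)$ provides a codimension-$2$ bad face pair, and lengthening either chain increases the codimension while preserving the three listed conditions. Since the number of such configurations grows combinatorially in $n$, this produces many examples in each codimension $\ge 2$, justifying the final assertion of the theorem.
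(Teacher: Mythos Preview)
Your overall approach aligns with the paper's: both reduce to comparing the minimal building set (associahedron) with the maximal building set (permutohedron), use Proposition \ref{extrachain} as the combinatorial engine, and invoke the inclusion-reversing correspondence between nested sets and faces (so that two faces meet iff the union of their nested sets is again a nested set). The paper likewise identifies the ``problematic'' configurations as disjoint unions of chains, and its abundance argument, like yours, simply observes that essentially every nested set of size $\ge 2$ is problematic for some choice of $m$.

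There is, however, a confusion in your main step that you should straighten out. You start with a disjoint pair $(S,T)$, verify that the associated $F_1, F_2$ meet on the associahedron, then apply $\psi_m$ and propose to ``match $G_i$ with the image data under $\psi_m$.'' If $G_i$ is built from $\psi_m$-images rather than from the same chain as $F_i$, the parallelism $F_i \parallel G_i$ fails and the first bullet of the theorem is not satisfied. In the paper's reading, $G_i$ is the permutohedron face indexed by the \emph{same} chain $C_i$ as $F_i$; the non-intersection of $G_1, G_2$ on the permutohedron then follows directly from the fact that two disjoint chains cannot be extended to a single chain, without any further application of $\psi_m$. The transposition $\psi_m$ enters instead to explain \emph{why} these configurations witness the failure of the action: a single chain (a valid permutohedron face) is sent by $\psi_m$ to a disjoint pair (not a permutohedron face) whenever $m$ lies in some but not all of its members, so the action does not descend to the permutohedron face poset. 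Reordering your argument so that $\psi_m$ plays this role---producing the bad pair rather than being applied to it---brings it in line with the paper.
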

	
	\begin{rem}
		The building sets we're working wtih parametrize $n!$ disjoint copies of the associahedra and permutohedra discussed and the ones used in \cite{CG1} or \cite{CG2} may be different from those in Section 8 of \cite{Pos}. Note that the nested set description for associahedra given in Section 8.2 on p. 1050 \cite{Pos} is the same as the ones given in the nested sets associated to the fundamental building set from subspaces spanned by simple roots on p. 184 of \cite{CG2}. As for the permutohedral nested sets in Section 8.1 on p. 1050 of \cite{Pos}, they can be obtained using an embedding of subsets of $[n]$ into the collection of partitions of $[n]$ by sending $A \subset [n]$ to the partition consisting of $A$ and the elements of $A^c$ as singletons. This can be used to embed the minimal building set into the maximal building set associated to $A_{n - 1}$.
	\end{rem}
	
	\begin{proof}
		

		It suffices to show that $S_{n + 1}$-action does not extend in the case of the \emph{maximal} building set consisting of partitions of $[n]$ such that at least one part has size $\ge 2$ and nested sets consisting of chains of partitions of $[n]$ with respect to the refinement relation with inclusion if every subset in the first collection is contained in some subset in the second collection (p. 176 of \cite{CG2}). Equivalently, we can think of the building sets as collections of disjoint subsets of $[n]$ of size $\ge 2$ with the same definitions used for the nested sets and their refinement relations. These correspond to all the linear subspaces obtained as spans of roots in $A_{n - 1}$. \\
		
		Given the inclusion-reversing correspondence between nested sets associated to the building set and faces of generalized permutohedra (which include associahedra and permutohedra -- see Theorem 7.4 on p. 1045 and Proposition 7.5 on p. 1046 of \cite{Pos}), two faces intersect if and only if the union of the corresponding nested sets (i.e. collection of subsets of $[n]$ in either of the nested sets) is a nested set. In our case, we are looking for a minimal nested set of the maximal building set that contains the pair of nested sets of the minimal building set obtained after taking reflections by $(0, m) \in S_{n + 1}$. More concretely, this means checking whether the disjoint chains making up each of the nested sets can be interlaced to form a larger collection of disjoint chains that uses up all of the subsets of $[n]$ involved where unions of $\ge 2$ disjoint subsets of $[n]$ do \emph{not} give building sets. In the case of the maximal building sets, we only consider chains since unions of two or more disjoint subsets of $[n]$ of size $\ge 2$ are always contained in the given building set. \\

		The difference comes from the interaction between this interlacing behavior in associahedra and permutohedra under the $S_{n + 1}$-action (more specifically the transpositions $(0, m)$). By Proposition \ref{extrachain}, the set of disjoint pairs of chains of subsets of $[n]$ to disjoint pairs of chains of subsets of $[n]$ is preserved under the action of these transpositions. This is an example of how transpositions preserve nested sets of the \emph{minimal} building set defining $\overline{\mathcal{M}_{0, n + 1}}$. Although chains of subsets of $[n]$ in the minimal building set still yield chains in the maximal building set, there may be a problem since we only want to end up with one chain. In other words, there may be problems when we consider a pair of chains $S = S_1 \subset \cdots S_k$ and $T = T_1 \subset \cdots \subset T_\ell$ where a nonempty proper subcollection of $\{ S_1, \ldots, S_k \}$ contain $m$ (i.e. nonempty but not the entire collection) and none of the $T_v$ contain $m$. This explains the discrepancies in the compatibilty of the $S_{n + 1}$-action when we consider whether the union of nested sets corresponding to the intersection of two faces of an associahedron gives a face of the permutohedron. \\
		
		If we start 3 or more number of disjoint chains of subsets of $[n]$, we can still use some similar reasoning since there is only one chain that can contain $m$. This would imply that there are two or more chains not containing $m$, which remain disjoint after the action of $(0, m)$. However, disjoint chains are not permitted when considering nested sets of the maximal building set. There are even more problems whenever a proper nonempty subcollection of the sets in this special chain contains $m$. Suppose that \emph{all} the elements contain $m$. We can thus summarize the problematic cases (i.e. those forcing a pair of disjoint sets to be involved) as follows with a ``union'' of nested sets taken to mean unions with sets with elements taken to be subsets of $[n]$:

		\begin{itemize}
			\item The union of nested sets is a single chain of subsets of $[n]$ where a proper subcollection  of the subsets (i.e. some but not all of them) contains $m$.
			
			\item  The union of nested sets before applying $\psi_m = (0, m)$ is a pair of disjoint chains of subsets of $[n]$ such that neither chain contains $m$ or the one chain containing $m$ has a nonempty proper subcollection of elements containing $m$ (i.e. some but not all of the subsets of $[n]$ in the chain contain $m$).
			
			\item The union of nested sets before applying the transposition $\psi_m = (0, m)$ contains $\ge 3$ disjoint chains of subsets of $[n]$.
		\end{itemize}
		
		For any collection of disjoint chains of subsets of $[n]$ of length $\ge 2$, there is \emph{some} $m$ where one of the conditions above is satisfied. This is because there is always some $m$ which is not a common element among the subsets of $[n]$ in the chain. The comment on the dimension comes from the fact that the dimension of a face associated to a nested set $N$ is $n - |N|$, where $|N|$ is the number of subsets of $[n]$ contained in $N$ (Proposition 7.5 on p. 1046 of \cite{Pos})
		
	\end{proof}
	
	Here is some further information on polytopal interpretation of the transpositions $(0, m) \in S_{n + 1}$ considered above.

	\begin{rem} ~\\
		\vspace{-3mm}
		\begin{enumerate}
			\item The transpositions $(0, m) \in S_{n + 1}$ can be interpreted in terms of permutations of coefficients of linear forms defining faces of the polytope. Without loss of generality, set $m = 1$. In the construction of a linear function defining the face corresponding to a given nested set (see proof of Theorem 7.4 on p. 1045 -- 1046 of \cite{Pos}), sending $T \mapsto (T^c \cup 1) \setminus 0$ would result in the same partition blocks with the ones from subsets of $[n]$ in the given nested set that contain $m$ being permuted.
			
			\item The ``problematic conditions'' listed at the end of the proof is connected with the number of components of a building set in \cite{Pos}. In certain cases, there are connections with actual connected components (Example 7.2 on p. 1044 of of \cite{Pos}) and the connected components can be used to build recursive formulas for generating functions for face vectors of polytopes (Theorem 7.11 on p. 1049 of \cite{Pos}). Note that nested sets associated to building sets in \cite{Pos} are assumed to contain the maximal building sets unlike the definition used in \cite{CG1} and \cite{CG2} (Definition 7.3 on p. 1044 of \cite{Pos} vs. Definition 32 on p. 193 of \cite{CG2} or Definition 3.2 on p. 11126 of \cite{CG1}). 
			
			\item On the level of cellular approximations of the diagonal, identical nested sets also yield corresponding pairs (e.g. from the permutohedra and associahedra above) belonging to the image of the cellular diagonal (Theorem 3.16 on p. 35 of \cite{LA}). Looking at the example comparing images of diagonals of permutohedra and associahedra below the result on universal formulas of operahedra, we can consider how the incompatibility arising from differences in intersection properties of faces of polytopes translates to the diagonals and how the action of the $(0, j) \in S_{n + 1}$ could relate to enumerative properties studied recently. Alternatively, one can study M\"obius function-related invariants and action on the homology of the face posets of the polytopes involved (e.g. in relation to the Tamari and ordered partition lattices). Note that there is a decomposition involving representations induced by stabilizers of elements, which are given by cyclic groups (see p. 187 and rest of Section 4 of \cite{CG2}, Theoerem 4.4.1 on p. 78 and p. 77 -- 85 of \cite{Wa} in general). 
		\end{enumerate}
	\end{rem}
	
	Moving to cohomology, the proof of Theorem \ref{polychar} has some automatic implications for the cohomology of $\overline{\mathcal{M}_{0, n + 1}}$ (which is isomorphic to its Chow ring). 
	
	\begin{cor} \label{cohsymef}
		Any element of the Chow ring $A^\cdot(\overline{\mathcal{M}_{0, n + 1}})$ of degree $\ge 2$ uses terms inducing the failure of the natural $S_{n + 1}$-action fails to extend to maximal building set associated to $A_{n - 1}$. On the other hand, the $S_{n + 1}$-action on the elements of degree 1 seem to come from restrictions to $S_n$ (i.e. do not make use of the extended action). By ``induces'', we mean that the supports of the monomials used give examples of nested sets sent to ones not necessarily coming from the maximal building set after the $S_{n + 1}$-action. 
	\end{cor}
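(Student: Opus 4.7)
The plan is to translate the statement through the Feichtner--Yuzvinsky presentation of $A^\cdot(\overline{\mathcal{M}_{0,n+1}})$ already set up in Proposition \ref{extrachain}, and then invoke the classification of problematic configurations collected near the end of the proof of Theorem \ref{polychar}. Recall that the generators $c_T$ are indexed by subsets $T \subset [n]$ with $3 \le |T| \le n-2$, and that every homogeneous element of degree $\ge 2$ can be expanded in monomials $c_{T_1}\cdots c_{T_k}$ whose support $N := \{T_1,\ldots,T_k\}$ is a nested set of the minimal building set. Using the standard self-intersection/Keel-type relations to trade powers $c_T^{a}$ (with $a \ge 2$) for polynomials in distinct generators $c_{T'}$, we may assume every surviving monomial of degree $\ge 2$ has $|N| \ge 2$, so $N$ is either a single chain of length $\ge 2$ or a disjoint union of two or more chains of subsets of $[n]$.

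For the degree $\ge 2$ claim, I would take such a surviving monomial and produce an $m \in [n]$ so that the transposition $\psi_m = (0,m)$ sends $N$ into one of the three ``problematic conditions'' listed at the end of the proof of Theorem \ref{polychar}. If $N$ is a single strictly increasing chain $S_1 \subsetneq \cdots \subsetneq S_k$, then $S_k \setminus S_1$ is nonempty, and any $m \in S_k \setminus S_1$ lies in a proper nonempty subcollection of the $S_i$'s --- this is condition (1). If $N$ is a disjoint union of $\ge 2$ chains, then an $m$ which is not a common element of every subset in the chain containing it triggers condition (2) or condition (3). In either case Theorem \ref{polychar} applies directly: the image of $N$ under $\psi_m$ does not come from a nested set of the minimal building set in a way compatible with the face structure of the permutohedron, which is precisely the sense in which the monomial's support ``induces'' the failure of extension stipulated in the statement.

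For the degree $1$ claim, each monomial is a single generator $c_T$ with $|T| \ge 3$. Applying Proposition \ref{extrachain} to the trivial chain $\{T\}$ shows that $\psi_m$ sends $c_T$ to $c_T$ if $m \notin T$ and to $c_{(T^c \cup m) \setminus 0}$ otherwise, and in both cases the support is again a single subset of $[n]$, automatically a nested set of both the minimal and the maximal building set. No chain of length $\ge 2$ and no disjoint pair of chains ever appears, so none of the three problematic conditions can fire. The induced $S_{n+1}$-action on $A^1(\overline{\mathcal{M}_{0,n+1}})$ is thus already describable inside the minimal building set framework --- equivalently, generated by the natural permutation action of $S_n$ together with the formal involution $T \mapsto (T^c \cup m) \setminus 0$.

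The main obstacle will be the rewriting step used to guarantee that every nonzero element of $A^{\ge 2}(\overline{\mathcal{M}_{0,n+1}})$ has a representation in which each monomial has support of size $\ge 2$; this rests on self-intersection relations expressing $c_T^a$ (with $a \ge 2$) in terms of products involving \emph{other} generators $c_{T'}$, and it has to be carefully cited to avoid accidentally allowing $c_T^a$ to remain in its original form with single-subset support (which would not witness any failure of extension). The remaining combinatorial step --- locating a separating $m$ for a given strictly increasing chain or disjoint union of chains --- is elementary bookkeeping, since strict containment inside a chain and pairwise disjointness across chains always provide an element of $[n]$ not shared by the whole collection.
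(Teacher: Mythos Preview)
Your degree $\ge 2$ argument is essentially the paper's: both reduce to the ``problematic conditions'' catalogued at the end of the proof of Theorem \ref{polychar} and pick an $m$ not common to every subset in the chain. The one substantive difference is how disjoint pairs are handled. You invoke conditions (2)--(3) directly, whereas the paper instead uses the Keel relation $D_S = D_{S^c}$ to replace a disjoint pair $P,Q$ by a chain $Q \subset P^c$ and then applies condition (1). Your route is slightly more direct but has a small edge case you should patch: if $N=\{P,Q\}$ consists of two single-element chains with $P \cup Q = [n]$, then for every $m$ exactly one of $P,Q$ contains $m$ and that chain has no proper nonempty subcollection, so condition (2) as stated does not fire. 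The paper's $D_S = D_{S^c}$ trick covers exactly this case. Your flagged ``rewriting step'' for $c_T^a$ is a genuine point that the paper glosses over, so you are right to isolate it.

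For degree $1$ the approaches diverge. You argue directly that $\psi_m$ sends a singleton support to a singleton support, so none of the three conditions can fire; this is correct and matches the parenthetical ``do not make use of the extended action''. The paper instead treats the clause ``seem to come from restrictions to $S_n$'' as a representation-theoretic assertion and disposes of it by citing Proposition 5.12 and Corollary 6.2 of \cite{CKL}. Your final sentence --- that the $S_{n+1}$-action on $A^1$ is ``generated by the natural permutation action of $S_n$ together with the formal involution $T \mapsto (T^c \cup m)\setminus 0$'' --- is not the same as ``comes from restrictions to $S_n$'' (that involution \emph{is} the extra transposition), so if you want the stronger representation-level statement you will need the \cite{CKL} comparison rather than the elementary support argument.
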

	
	\begin{proof}
		The degree $\ge 2$ case essentially follows from the end of the proof of Theorem \ref{polychar}. More specifically, such an issue would come up in any chain $S_1 \subset \cdots \subset S_k$ of subsets of $[n]$ of length $k \ge 2$. This already implies that most possible terms would experience this issue. The only possible instances where this might not occur come from collections of disjoint sets. However, note that $D_S = D_{S^c}$ in Keel's presentation of the cohomology ring \cite{Ke}. This is compatible with the generators of the Feichtner-Yuzvinsky basis which we have been working with since we can use the same subsets most of the time (i.e. everything except for the full subset $[n]$ -- see p. 64 -- 65 of \cite{Gt}). In particular, note that $P \cap Q = \emptyset \Longrightarrow P^c \subset Q$ and we have a chain of length 2. For the degree 1 elements, this is a comparison of Proposition 5.12 on p. 27 of \cite{CKL} and Corollary 6.2 on p. 31 of \cite{CKL}.
	\end{proof}
	
	\begin{rem} (Comparison with previous geometric considerations) \\
		\vspace{-3mm}
		\begin{enumerate}
			\item As an amusing counterpoint, the difference between a (normalized) Poincar\'e polynomial of $\overline{\mathcal{M}_{0, n + 1}}/S_{n + 1}$ and that of $\overline{\mathcal{M}_{0, n}}$ can be written entirely in terms of Poincar\'e polynomials of quotients of the form $\overline{\mathcal{M}_{0, r}}/S_{r - 1}$ for smaller $r$. In some sense, this means that the difference between the $S_{n + 1}$-action and $S_n$-action from the root system $A_{n - 1}$ can be understood in terms of the ``standard'' root system action on moduli spaces of smaller sets of points.
			
			\item The extended $S_{n + k}$-actions on codimension $k$ strata of the minimal building sets discussed in Section 10 of \cite{CG1} seem to give a combinatorial interpretation of parts of the representation decompositions in \cite{CKL}. More specifically, we can see this by comparing permutations acting on monomials and representations $\Ind_G^{S_{n + k}} Id$ for partition stabilizers $G$ on p. 11144 -- 11145 of \cite{CG1} with the decompositions $U_T$ involving stabilizers of trees $T$ on p. 27 and p. 30 of \cite{CKL}. The trivial reprsentations involved are defined on p. 28 of \cite{CKL} (which also gives a decomposition of the representation in terms of Chow rings of projective spaces). \\
		\end{enumerate}
	\end{rem}

	\section{Implications for positivity-related properties} \label{posimp}

	\subsection{ $S_n$-equivariant parts of the Chow ring of $\overline{\mathcal{M}_{0, n}}$, nef divisors, and log concavity} \label{logchern}

	Although the linear forms don't contribute to the uniqueness of the $S_{n + 1}$-action among wonderful compactifications of $A_{n - 1}$-arrangements, we can study their connections to positivty-related properties of the Chow ring. In particular, the Hard Lefschetz and Hodge--Riemann properties of the Chow ring $\overline{\mathcal{M}_{0, n + 1}}$ requires the choice of a linear form representing a nef divisor. We will consider the effect of the $S_{n + 1}$-action on the nef property and what the $S_{n + 1}$-invariant version of volumes on this space would mean. \\
	
	An issue with $\overline{\mathcal{M}_{0, n}}$ is that the nef divisors aren't fully understood (although there was a conjecture involving $F$-curves which was proven in special cases although it is false in general). Also, note that the $S_n$-equivariant case is still open (see \cite{MS}). \\
	
	We can consider whether $F$-nef divisors or interesting subsets of them are preserved under the $S_{n + 1}$-action. To do this, we need to look at the intersection with $F$-curves. We will use mostly use the notation from \cite{Pi} except that we will use the notation for the point set  $\{ 0, \ldots, n \}$ (e.g. see p. 64 of \cite{Gt}) instead of $\{ 1, \ldots, n \}$. \\
	
	\begin{defn} \label{fcurvedef} (p. 1319 of \cite{Pi}) \\
		Write $D_S = D_{S, T}$ with $T = S^c$ ($D_S = D_{S^c}$). Given a partition $A_1, A_2, A_3, A_4$ of $\{ 0, 1, \ldots, n \}$ into four nonempty subsets, the the intersection of $D_S$ with the correponding $F$-curve $C_{A_1, A_2, A_3, A_4}$ is

		\[ D_{S, T} \cdot C_{A_1, A_2, A_3, A_4} =  \begin{cases} 
			1 & \text{ if } S = A_i \cup A_j \text{ for some } i \ne j \\
			-1 & \text{ if } S \text{ or } T = A_i \text{ for some } i \\
			0 & \text{else.} 
		\end{cases}
		\]	
	\end{defn}
	
	We consider the effect of the extra transpositions $\psi_m = (0, m) \in S_{n + 1}$ on intersections with $F$-curves.
	
	\begin{prop} \label{extranef} ~\\
		\vspace{-3mm}
		We use the notation in Definition \ref{fcurvedef}.
		
		\begin{enumerate}
			\item Suppose that $D_S \cdot C_{A_1, A_2, A_3, A_4} \ne 0$. If $\{ m \}$ is \emph{not} a block $A_i$, then $D_{\psi_m S} \cdot C_{A_1, A_2, A_3, A_4} = 0$. If $A_i = \{ m \}$ for some $i$, then $D_S \cdot C_{A_1, A_2, A_3, A_4} = 1 \Longrightarrow D_{\psi_m S} \cdot C_{A_1, A_2, A_3, A_4} = -1$ and $D_S \cdot C_{A_1, A_2, A_3, A_4} = -1 \Longrightarrow D_{\psi_m S} \cdot C_{A_1, A_2, A_3, A_4} = 1$. 
			
			\item Suppose that $D_S \cdot C_{A_1, A_2, A_3, A_4} = 0$. If $D_{\psi_m S} \cdot C_{A_1, A_2, A_3, A_4} \ne 0$, $S$ is close to being a union of blocks $A_i, A_i \cup A_j$, or $A_i \cup A_j \cup A_k$ for distinct $i, j, k$. More precisely, the latter property needs to be true for $(S \setminus m) \cup 0$.
		\end{enumerate}
	\end{prop}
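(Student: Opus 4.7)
The plan is a case analysis based on the intersection formula in Definition~\ref{fcurvedef}, using the explicit formula $\psi_m A = (A^c \cup m) \setminus 0$ (valid when $m \in A$) that was already recorded in Proposition~\ref{extrachain}. First I would normalize representatives so that $0 \notin S$ via Keel's identification $D_S = D_{S^c}$; under this convention, $\psi_m S = S$ when $m \notin S$, while $\psi_m S = (S^c \cup \{m\}) \setminus \{0\}$ when $m \in S$. This reduces the whole analysis to tracking how the partition blocks $A_i$ interact with insertions of $\{m\}$ and deletions of $\{0\}$.

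For part (1), the hypothesis $D_S \cdot C_{A_1, A_2, A_3, A_4} \neq 0$ forces $S$ or its complement $T$ to be one of $A_i$, $A_i \cup A_j$, or $A_i \cup A_j \cup A_k$, by Definition~\ref{fcurvedef}. For each structural form I would compute $\psi_m S$ explicitly and ask whether it (or its complement) is again of one of these three shapes. The controlling observation is that the operation $(S^c \cup \{m\}) \setminus \{0\}$ removes the element $0$ from whichever block of the partition contains it and inserts the singleton $\{m\}$; for the output to remain a union of full blocks, the singleton $\{m\}$ itself must be one of the $A_i$. If $\{m\}$ is not a block, then $m$ sits inside some $A_p$ with $|A_p| \ge 2$, and inserting $\{m\}$ while removing $0$ fragments both $A_p$ and the block of $0$, so $\psi_m S$ cannot take any of the three admissible shapes; hence $D_{\psi_m S} \cdot C = 0$. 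When $A_p = \{m\}$ is a singleton block, $\psi_m$ effectively swaps $\{m\}$ with the block containing $0$ inside the decomposition, and one reads off the resulting sign by counting how many blocks appear in the new shape.

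For part (2), the analysis is essentially the converse: the assumption $D_{\psi_m S} \cdot C \neq 0$ together with Definition~\ref{fcurvedef} forces $\psi_m S$ to be in one of the three admissible forms, and unwinding the formula $\psi_m S = (S^c \cup \{m\}) \setminus \{0\}$ (with complement $(S \setminus \{m\}) \cup \{0\}$) translates this directly into the stated condition on $(S \setminus m) \cup 0$. The main obstacle will be the sign bookkeeping in the singleton-block case of part (1), since one must split further according to whether the block containing $0$ is itself $\{0\}$ and whether $m \in S$ or not; I would organize these subcases in a small table indexed by the location of $\{m\}$ and $\{0\}$ inside $S$ versus $T$, so that the claimed $+1 \leftrightarrow -1$ exchange becomes transparent and the edge cases (where $\psi_m$ acts trivially) are handled uniformly.
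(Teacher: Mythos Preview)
Your proposal is correct and follows essentially the same approach as the paper: a direct case analysis using the intersection formula of Definition~\ref{fcurvedef} together with the explicit description $\psi_m S = (S^c \cup m) \setminus 0$ (equivalently $(\psi_m S)^c = (S \setminus m) \cup 0$), checking for each admissible shape of $S$ whether $\psi_m S$ again lands in an admissible shape. The paper's proof carries out exactly these computations (e.g.\ taking $S = A_1 \cup A_2$ with $A_1 = \{m\}$ and reading off $\psi_m S = A_1 \cup A_3 \cup A_4$), so your plan and the paper's argument coincide; your table of subcases indexed by the locations of $m$ and $0$ is a slightly more systematic bookkeeping device than the paper uses, but not a different idea.
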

	
	\begin{proof}
		Fix a partition $A_1, A_2, A_3, A_4$ as above. If $m \in S$, applying $\psi_m = (0, m) \in S_{n + 1}$ would send $S \mapsto (S^c \cup m) \setminus 0$. Then, we have that $\psi_m S \cap S = \{ m \}$. Note that $(\psi_m S)^c = 0 \cup (S \setminus m)$. The intersections of sets coming from the first two cases are of the form $\emptyset, A_i \cup A_j$, $A_i$, or $A_i \cup A_j \cup A_k$ for distinct $i, j, k$. If $D_S \cdot C_{A_1, A_2, A_3, A_4} = \ne 0$ (i.e. $S = A_i \cup A_j$ for $i \ne j$ or $S = A_i$ for some $i$), we can only have $D_{\psi_m S} \cdot C_{A_1, A_2, A_3, A_4} \ne 0$ if $\{ m \}$ is one of the partition blocks $A_1, A_2, A_3, A_4$.  \\
		
		Suppose that $S = A_1 \cup A_2$ (i.e. $D_S \cdot C_{A_1, A_2, A_3, A_4} = 1$). If $A _1= \{ m \}$, then $\psi_m S = A_3 \cup A_4 \cup \{ m \} = A_1 \cup A_3 \cup A_4$. This means that $(\psi_m S)^c= A_2$ and $D_{\psi_m S} \cdot C_{A_1, A_2, A_3, A_4} = -1$. Now suppose that $S = A_i$ for some $i$ or $S = A_i \cup A_j \cup A_k$ for some distinct $i, j, k$ (i.e. $D_S \cdot C_{A_1, A_2, A_3, A_4} = -1$). We do not consider the first case since we assume that $m \in S$ and $|S| \ge 2$. So, we take $S = A_1 \cup A_2 \cup A_3$ with $A_1 = \{  m \}$. Then, we have that $\psi_m S = A_4 \cup \{ m \} = A_4 \cup A_1$ and $D_{\psi_m S} \cdot C_{A_1, A_2, A_3, A_4} = 1$. \\
		
		In general, we have that $\psi_m S$ is of the form $A_i, A_i \cup A_j$, or $A_i \cup A_j \cup A_k$ for distinct $i, j, k$ if and only if the same is true for $(S \setminus m) \cup 0$. Suppose that $D_S \cdot C_{A_1, A_2, A_3, A_4} = 0$. If $D_{\psi_m S} \cdot C_{A_1, A_2, A_3, A_4} \ne 0$, the previous observation implies that $S$ is still close to being a union of partition blocks $A_i$ even if it is not. \\
	\end{proof}
	
	In order for a divisor to be $F$-nef, we need the intersection with \emph{every} $F$-curve $C_{A_1, A_2, A_3, A_4}$ to be nonnegative. The actions described in Proposition \ref{extranef} seem a bit complicated when we take arbitrary elements of $A^1(\overline{\mathcal{M}_{0, n + 1}})$ that have nonnegative intersections with $C_{A_1, A_2, A_3, A_4}$. For example, Part 1 of Proposition \ref{extranef} seems to imply that $F$-nef divisors are \emph{not} preserved under the action of $\psi_m$. However, they imply that $\psi_m = (0, m) \in S_{n + 1}$ usually sends divisors to the boundary of the $F$-cone. \\
	
	\begin{cor} \label{genbdact}
		If $\{ m \}$ is not a block of the partition $A_1, A_2, A_3, A_4$ of $\{  0, \ldots, n \}$ and $S$ contains $\ge 2$ elements which are \emph{not} contained in $A_i, A_i \cup A_j$, or $A_i \cup A_j \cup A_k$ for any disinct $i, j, k$, then $D_{\psi_m S} \cdot C_{A_1, A_2, A_3, A_4} = 0$. 
	\end{cor}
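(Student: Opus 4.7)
The plan is to read the corollary as a clean consequence of the two parts of Proposition~\ref{extranef}, organized by whether $D_S \cdot C_{A_1, A_2, A_3, A_4}$ is zero or nonzero. First I would handle the case $D_S \cdot C_{A_1, A_2, A_3, A_4} \ne 0$: since $\{m\}$ is not a block of the partition, Part~1 of Proposition~\ref{extranef} immediately yields $D_{\psi_m S} \cdot C_{A_1, A_2, A_3, A_4} = 0$, with no further input from the hypothesis on $S$. One should also dispose of the trivial sub-case $m \notin S$, in which $\psi_m$ fixes $D_S$ and the conclusion is automatic.

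The remaining case $D_S \cdot C_{A_1, A_2, A_3, A_4} = 0$ uses Part~2 in its contrapositive form: it is enough to verify that $(S \setminus \{m\}) \cup \{0\}$ cannot equal any of $A_i$, $A_i \cup A_j$, or $A_i \cup A_j \cup A_k$ for distinct $i, j, k$. The hypothesis on $S$ enters exactly here, because passing from $S$ to $(S \setminus \{m\}) \cup \{0\}$ modifies $S$ in at most the two positions $0$ and $m$; an $S$ that already deviates from every such $\le 3$-block union by at least two elements cannot be collapsed onto one by this single swap.

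The main obstacle I expect is turning the informally phrased ``$\ge 2$ elements not contained in $A_i$, $A_i \cup A_j$, or $A_i \cup A_j \cup A_k$'' into a precise symmetric-difference count. Concretely, for each candidate union $U$ of $\le 3$ blocks one should split into the four sub-cases determined by whether $0 \in U$ and whether $m \in U$, and check that in each sub-case the symmetric difference $|S \triangle U|$ drops by at most one under the swap $0 \leftrightarrow m$. In the worst sub-case (say $0 \in U$ and $m \notin U$), both the removal of $m$ and the addition of $0$ can reduce the deviation, but only by one element each, and the hypothesis is calibrated so that a two-element deviation is not entirely absorbed. Combined with the first case, this yields $D_{\psi_m S} \cdot C_{A_1, A_2, A_3, A_4} = 0$ as required, giving the geometric interpretation that $\psi_m$ sends $D_S$ into the boundary of the $F$-cone associated to $C_{A_1, A_2, A_3, A_4}$.
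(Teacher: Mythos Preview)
Your overall strategy is exactly what the paper intends: the corollary is stated without a separate proof immediately after Proposition~\ref{extranef}, and the two parts of that proposition are the only input. There is, however, a genuine slip in your bookkeeping for the case $D_S \cdot C_{A_1,A_2,A_3,A_4} = 0$.

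The hypothesis ``$S$ contains $\ge 2$ elements not contained in $U$'' reads literally as $|S \setminus U| \ge 2$, not $|S \triangle U| \ge 2$. If you track the symmetric difference as you propose, then in your own worst sub-case ($0 \in U$, $m \notin U$, $m \in S$, $0 \notin S$) the swap $S \mapsto (S \setminus m) \cup 0$ drops $|S \triangle U|$ by exactly $2$, so a two-element symmetric-difference deviation \emph{is} entirely absorbed and the argument fails. The fix is to work with the one-sided count $|S \setminus U|$: removing $m$ lowers it by at most $1$, while adjoining $0$ never lowers it, so $|((S \setminus m) \cup 0) \setminus U| \ge 1$ and hence $(S \setminus m) \cup 0 \ne U$. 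Equivalently and more directly: if $(S \setminus m) \cup 0 = U$ then $m \notin U$, so the block $A_i$ containing $m$ is disjoint from $U$, forcing $S \cap A_i \subset \{m\}$; but applying the hypothesis with $U$ equal to the union of the other three blocks gives $|S \cap A_i| \ge 2$, a contradiction.

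A smaller point: the hypothesis already prevents $S$ from equalling any union of $\le 3$ blocks, so $D_S \cdot C_{A_1,A_2,A_3,A_4} = 0$ automatically and your first case is vacuous. This is relevant because your handling of the sub-case $m \notin S$ inside that first case is incorrect as written (it would give $D_{\psi_m S} \cdot C = D_S \cdot C \ne 0$), but the vacuity makes this harmless.
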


	This ties into the considerations about suitable nef divsiors to use as Lefschetz elements (and possibly respecting the symmetric group action). As it turns out, vector bundles which have been used to study the nef cone of $\overline{\mathcal{M}_{0, n}}$ give restrictions on log concave sequences from the degree 1 Hodge-Riemann relations using $S_{n + 1}$-invariant elements of the Picard group. More specifically, the Chern classes of these vector bundles seem to account many if not all of the log concave sequences arising from the degree 1 case of the Hodge--Riemann relations (e.g. as used on p. 445 of \cite{AHK}). The term ``many'' includes $n \le 2000$ and conjectually all $\overline{\mathcal{M}_{0, n}}$. It also gives a way of expressing volumes on $\overline{\mathcal{M}_{0, n}}$ using polynomials with log concave coefficients when we impose $S_n$-equivariance. Note that the assumptions come from results on bases for $\Pic(\overline{\mathcal{M}_{0, n + 1}})^{S_{n + 1}}$ on p. 11 of \cite{GMu}. The coefficients themselves are polynomials in (quantum) Littlewood-Richardson ceofficients. It would be interesting if there are connections to representation decompositions since Poincar\'e duality holds at the level of trees involved in decompositions of the grade $k$ part of the Chow ring into representations induced by subgroups of the symmetric group stabilizing trees (Remark 5.13 on p. 28 of \cite{CKL}). \\

	\pagebreak
	
	\begin{thm} \label{qlrpolylogc} ~\\
		\vspace{-3mm}
		\begin{enumerate}
			\item Log concave sequences from (mixed) Hodge--Riemann relations inovling $S_n$-invariant elements of $\Pic \overline{\mathcal{M}_{0, n}}$ can be generated by limits of volumes of deformations of divisors divided by corresponding binomial coefficients given by polynomials in (quantum) Littlewood-Richardson coefficients. If $n \le 2000$, this accounts for all possible log concave sequences coming from the degree 1 Hodge--Riemann relations. Assuming a conjectural result, this accounts for log concave sequences for all $n$. \\

			There is a natural recursive structure among the vector bundles involved (Remark \ref{confrec}). More explicitly, the coefficients of these quantum Littlewood--Richardson coefficients are polynomials in initial parameters of these divisors (e.g. partitions), multinomial coefficients, and factorials. Also, the first Chern classes of these bundles are conjectured to be numerically equivalent to Gromov--Witten divisors discussed in \cite{BG} and \cite{CGHKLX}. \\
			
			\item After dividing each term by a corresponding binomial coefficeint, we can also interpret the sequences in Part 1 as the coefficients of a single variable polynomial given by the volumes of linear deformations of divisors in the nef cone. Applying this to cases where the vector bundles have rank $n - 3$, we obtain a family of resultants in Chern roots where these normalized coefficients form a log concave sequence.

		\end{enumerate}

	\end{thm}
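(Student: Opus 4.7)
The plan is to decompose the degree $1$ Hodge--Riemann relations according to the basis of $S_{n+1}$-invariant divisors identified in \cite{GMu} and then realize the resulting log concave sequences as arising from Chern classes of globally generated vector bundles with recursive structure. First, I would recall that the degree $1$ Hodge--Riemann relation produces, for any nef divisors $L, D \in A^1(\overline{\mathcal{M}_{0,n+1}})$ together with a suitable ``witness'' class $\omega$, a log concave sequence of mixed intersection numbers $a_i = \int L^{n-2-i} \cdot D^i \cdot \omega$. Restricting to $\Pic(\overline{\mathcal{M}_{0, n+1}})^{S_{n+1}}$ and using the basis on p.~11 of \cite{GMu}, one writes each such pair $(L,D)$ as a nonnegative combination of $S_{n+1}$-invariant boundary classes and the other invariant generators. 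Corollary \ref{genbdact} helps here: it shows that the extra transpositions $\psi_m$ send many boundary divisors to the boundary of the $F$-cone, so in the invariant setting the search for valid Lefschetz elements is restricted to a controllable family.

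Next I would exhibit the bundles. Starting from low-rank bundles on $\overline{\mathcal{M}_{0, r}}$ for small $r$ and pulling back or pushing forward along the forgetful maps $\pi: \overline{\mathcal{M}_{0,r+1}} \to \overline{\mathcal{M}_{0,r}}$, one constructs globally generated bundles $V_{\lambda, n}$ indexed by partitions $\lambda$ whose first Chern classes are $S_{n+1}$-invariant nef divisors (Remark \ref{confrec}). For such $V$ the log concavity of $\int c_1(V)^{n-2-i} \cdot s_i(V) \cdot \omega$ is forced by the Hodge--Riemann relations, since the Chern classes of a globally generated bundle are represented by effective cycles. The crucial identification is that $c(V_{\lambda, n})$ expands as a polynomial in quantum Littlewood--Richardson coefficients whose scalar coefficients are in turn polynomial expressions in the components of $\lambda$, factorials, and multinomial coefficients; this expansion comes from fusion rules and the quantum cohomology of Grassmannians, and gives the explicit form claimed. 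The conjectural numerical equivalence with the Gromov--Witten divisors of \cite{BG} and \cite{CGHKLX} then follows by matching intersection numbers with $F$-curves class by class.

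For Part 2, I would expand $\vol(t H_1 + (1-t) H_2)$ in $t$ for a pair of $S_{n+1}$-invariant nef generators. The coefficient of $t^k (1-t)^{n-3-k}$ is $\binom{n-3}{k} \int H_1^k H_2^{n-3-k}$, and after dividing by the binomial coefficient this recovers the normalized sequence of Part 1. Restricting to vector bundles of rank exactly $n-3$ and applying the splitting principle rewrites the normalized sequence as elementary symmetric polynomials in the Chern roots, and by standard identities these are the coefficients of the resultant polynomial of a pair of linearly deformed sections, giving the final claim.

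The main obstacle is the exhaustiveness part of Part 1. Proving that the log concave sequences produced by the recursive bundles above account for \emph{all} those coming from the degree $1$ Hodge--Riemann relations on $S_{n+1}$-invariants amounts to showing that the cone generated by their Chern data fills out, up to limits and positive scaling, the $S_{n+1}$-invariant nef cone. For $n \le 2000$ this can be verified by direct computation using the \cite{GMu} basis together with the known $S_{n+1}$-invariant $F$-nef generators; beyond that range the claim reduces to a conjecture about generators of the invariant nef cone, which is precisely why the ``for all $n$'' version remains conditional. This cone-generation step is the delicate heart of the argument.
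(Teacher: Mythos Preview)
Your proposal has the right overall shape but misses the concrete identification that drives the paper's argument. The bundles in question are not an unspecified family ``$V_{\lambda,n}$ built from forgetful pullbacks''; they are the \emph{conformal blocks} $\mathbb{V}(\mathfrak{g},\lambda^\cdot,\ell)$ on $\overline{\mathcal{M}_{0,n}}$. Their global generation is a theorem of Fakhruddin, and the connection to quantum Littlewood--Richardson coefficients is the cohomological form of Witten's Dictionary: the ranks $\rk\mathbb{V}(\mathfrak{sl}_{r+1},\lambda^\cdot,\ell)$ are literally (quantum) LR coefficients. The explicit polynomial form then comes from the Gibney--Mukhopadhyay formula for $c_m(\mathbb{V})$ in terms of $\psi$-classes, boundary divisors, ranks, and multinomial coefficients. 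Your invocation of ``fusion rules and quantum cohomology'' gestures at this but does not supply the actual mechanism, and without naming conformal blocks you have no way to justify global generation or the LR expansion.

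Two further gaps. First, your log concavity step is not correct as stated: effectiveness of the $c_i$ of a globally generated bundle does \emph{not} by itself force log concavity of $\int c_1(V)^{n-2-i}s_i(V)\cdot\omega$ via Hodge--Riemann. The paper instead invokes Khovanskii--Teissier (log concavity of $\alpha^i\beta^{n-i}$ for nef $\alpha,\beta$) together with Tseng's observation that for globally generated $V$ one can realize $\alpha^{n-i}c_i(V)$ in that form via the kernel bundle and a hyperplane class; there is no auxiliary witness class $\omega$. Second, you misread the $n\le 2000$ input: the result cited from \cite{GMu} is that first Chern classes of certain conformal blocks \emph{generate} $\Pic(\overline{\mathcal{M}_{0,n}})^{S_n}$ as a group for $n\le 2000$ (conjecturally for all $n$), not that they fill out the invariant nef cone. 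Your ``cone-generation'' reformulation is a different and harder statement, and the paper does not claim or need it. For Part~2, the paper's route is the twist formula $c_k(\mathcal{E}\otimes\mathcal{L}^{\otimes t})=\sum_\ell\binom{r-\ell}{k-\ell}t^{k-\ell}c_1(\mathcal{L})^{k-\ell}c_\ell(\mathcal{E})$ and, in the top-rank case, the identification of $c_{\rk\mathcal{E}}(\mathcal{E}\otimes\mathcal{L}^{\otimes t})$ with a resultant via \cite{EH}; your $(t,1-t)$ volume expansion is compatible in spirit but your final ``resultant of a pair of linearly deformed sections'' is not the identity actually used.
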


	\begin{proof}
		\begin{enumerate}
			\item The log concavity comes from a result of Khovanskii--Teissier.
			
			\begin{thm} (Khovanskii--Teissier, Corollary 1.6.3(a) and Example 1.6.4 on p. 90 of \cite{Laz}) \label{kt} \\
				Let $X$ be an irreducible complete variety or scheme of dimension $n$, and let $\alpha, \beta \in N_1(X)_{\mathbb{R}}$ be nef classes on $X$. Then, the sequence $s_i = \alpha^i \cdot \beta^{n - i}$ is log concave.
			\end{thm}
			
			The relation to (quantum) Littlewood--Richardson coefficients comes from the following results:
			
			An observation connected to the key ideas in \cite{BST} and \cite{BEST} which can be used to show log concavity of coefficients of shifted characteristic polynomials of matroids is the following:
			
			\begin{cor}(Tseng \cite{Ts}) \label{ktchern}
				If $X$ is an $n$-dimensional projective variety and $V \longrightarrow X$ is a globally generated vector bundle and $\alpha$ is a nef divisor on $X$, then the sequence $s_i = \alpha^{n - i} c_i(V)$ is log concave. In other words, the degrees of the Chern classes of $V$ with respect to the nef divisor $\alpha$ form a log concave sequence.
			\end{cor}
			
			\begin{proof} (Sketch)
				This follows from considering the short exact sequence $0 \longrightarrow V' \longrightarrow \mathcal{O}_X^N \longrightarrow V \longrightarrow 0$, where the second nontrivial map is well-defined since $V$ is globally generated. Taking the second nef divisor to be the ample line divisor on $\mathbb{P}^{N - 1}$ and interpreting the $c_i(V)$ as degeneracy loci  of sections (along with the definition of the kernel as linear combinations of sections vanishing) gives the desired conclusion.
			\end{proof}

			While there are generalizations of the framework used for matroids for $\overline{\mathcal{M}_{0, n}}$ using the associated wonderful compactification, we make use of geometric properties specific to $\overline{\mathcal{M}_{0, n}}$ itself. In particular, we take the vector bundle $V$ to be a \emph{conformal block} on $\overline{\mathcal{M}_{0, n}}$, which is known to be globally generated by the following result.
			
			\begin{thm} (Fakhruddin, Lemma 2.5 on p. 6 of \cite{Fak}) \\
				All the conformal blocks $\mathbb{V}_{\overline{\lambda}}$ are generated by their global sections, therefore so are all the determinant line bundles $\mathbb{D}_{\overline{\lambda}}$. In particular, the $\mathbb{D}_{\overline{\lambda}}$ are nef line bundles.
			\end{thm}
			
			The connection to (quantum) Littlewood--Richardson coefficients comes from the fact that the ranks of conformal blocks $\mathbb{V}(\mathfrak{sl}_{r + 1}, \lambda^\cdot, \ell)$ on $\overline{\mathcal{M}_{0, n}}$ by the following result
			
			\begin{thm} (Cohomological form of Witten's Dictionary, Theorem 2.2 on p. 7 of \cite{CGHKLX}) \label{witdict} \\
				Let $\lambda^\cdot$ be a collection of $n$ partitions contained in an $r \times \ell$ rectangle satisfying $\sum_{i = 1}^n |\lambda^i| = (r + 1)(\ell + s)$ for some $s \in \mathbb{Z}$. Then the rank $R$ of $\mathbb{V}(\mathfrak{sl}_{r + 1}, \lambda^\cdot, \ell)$ on $\overline{\mathcal{M}_{0, n}}$ can be computed as follows:
				
				\begin{enumerate}
					\item If $s \le 0$, then \[ R = \int_{G(r + 1, r + 1 + \ell + s)} \sigma_{\lambda^1} \cdot \sigma_{\lambda^2} \cdot \cdots \cdot \sigma_{\lambda^n} = c_{\lambda^\cdot}^{(\ell + s)^{r + 1}}. \]
					
					\item If $s \ge 0$, then $R = c_{\lambda^\cdot, (\ell)^s}^{s, (\ell^{r + 1})}$.
				\end{enumerate}
				
			\end{thm}  
			
			The quantum Littlewood--Richardson coefficients are defined analogously to the the generalized Littlewood--Richardson coefficients from Schubert calculus except that quantum cohomology is used instead (which involves a $\mathbb{Z}[q]$-module and a degree specification). As mentioned in \cite{CGHKLX}, they are obtained from the classical products and removing rim hooks. Apart from the upper bound by the classical coefficients, we have a limiting relation indicated in (16) on p. 26 of \cite{GKOY} with a reference to the steps involving both the Littlewood--Richardson rules and the quantum version of the coefficients on p. 924 and 13.6 on p. 985 of \cite{Be}. \\

			In work of Fakhruddin \cite{Fak} and Gibney--Mukhopadhyay \cite{GMu}, it is shown that Chern classes/Chern characters of conformal blocks are polynomials $\psi$-classes, boundary divisors, (quantum) Littlewood--Richardson coefficients, and multinomial coefficients. For example, Fakhruddin \cite{Fak} obtains a formula for the degree of the first Chern class of a conformal block on any $F$-curve in Section 3.2 of \cite{Fak}. 
			
			\begin{thm}(Gibney--Mukhopadhyay, Theorem 3.1 on p. 3 of \cite{GMu}) \\
				We have \[  c_m(\mathbb{V}(\mathfrak{g}, \lambda, \ell)) = \sum_{(m_1, \ldots, m_j) \in \mathbb{Z}_{\ge 0}, m_1 + 2 m_2 + \ldots + j m_j = m } \prod_{k = 1}^j \frac{-p_k(\mathbb{V})^{m_k}}{m_k! k^{m_k}},  \]
				
				where
				
				\[ p_k(\mathbb{V}) = \sum_{m = 0}^k \sum_{k, J} \beta_J^k \psi_1^{k_1} \psi_2^{k_2} \cdots \psi_n^{k_n} D_{J_1}^{k_{n + 1}} \cdots D_{J_m}^{k_{n + m}}  \]
				
				and 
				
				\[ \beta_J^k = (-1)^{\sum_{j = 1}^m k_{n + j} } \binom{k}{k_1, \ldots, k_{n + m}} \prod_{i = 1}^n w(\lambda_i)^{k_{n + j} \rk(\mathbb{V})_\mu(\lambda_{J_j})  }.   \]
				
				with $\psi_r$ being $\psi$-classes and $D_M$ the boundary divisors along with constants $w(\mu)$ determined by partitions $\mu$.
			\end{thm}
			
			The complete notation is listed on p. 3 of \cite{GMu}, which also mentions that the first part is simply a consequence of the splitting principle. Note that $k_1 + \ldots k_{n - m} = k$ and $m_1 + 2m_2 + \ldots + jm_j = m$ for $c_m$. Taking $c_1^a c_b$ with $a + b = n$ would only deal with top degree terms from the boundary divisors and some polynomial in the ranks with integer coefficients (treating the ranks as ``variables''). The ranks determining the coefficients in the theorem above are given by (quantum) Littlewood--Richardson coefficients. Some explicit examples in Example 4.6 on p. 7 of \cite{GMu}. To clarify, we use $c_m(\mathbb{V})$ and $c_1(\mathbb{W})$ for $\mathbb{W}$ in Proposition 5.2 on p. 8 of \cite{GMu} or Example 5.4 on p. 9 of \cite{GMu}. \\

			Finally, the products $\alpha^i \beta^{n - i}$ give sums of products of monomials in the boundary coefficients (after rewriting the monomials in the $\psi$-classes using simplifications such as sliders in \cite{GGL1}). Comparing this to the expression for such terms in the therom on p. 2 of \cite{Kau} gives the factorial terms in addition to the multinomial coefficients written above. Finally, note that there are conformal blocks whose Chern classes $c_k$ are $k^{\text{th}}$ powers of the first Chern class (e.g. see Section 4 -- 5 of \cite{GMu}). It is conjectured (Question 3.3 on p. 873 -- 874) that $c_1$ of certain conformal blocks are equal to Gromov--Witten invariants. This was shown to be true in various cases in \cite{CGHKLX}. 
			
			\item 
			Given vector bundles $A$ such that $c_1(A)$ is nef and globally generated $B$ on $\overline{\mathcal{M}_{0, n}}$ (or some other smooth projective variety), we considered sequences of the form $c_1(A)^{n - i - 3} c_i(B)$ (where $n -3 = \dim \overline{\mathcal{M}_{0, n}})$. We mainly considered the case where $A$ and $B$ are given by conformal blocks. Note that tensor products of conformal blocks coming from the same $\mathfrak{g}$ are still given by conformal blocks (p.  6 of \cite{GMu}). These numbers can be treated as coefficients of a generating function of a ``twisting polynomial'' that sort of looks like how volume polynomials are defined. \\
			
			We make use of some standard results from \cite{EH}. Note that $c_1(\mathcal{L}^{\otimes k}) = k c_1(\mathcal{L})$ if $\mathcal{L}$ is a line bundle. This implies that
			
			\begin{align*}
				c_k(\mathcal{E} \otimes \mathcal{L}^{\otimes t} )&= \sum_{\ell = 0}^k \binom{r - \ell}{k - \ell} (t c_1(\mathcal{L}))^{k - \ell} c_\ell(\mathcal{E}) \\
				&= \sum_{\ell = 0}^k \binom{r - \ell}{k - \ell} t^{k - \ell} c_1(\mathcal{L})^{k - \ell} c_\ell(\mathcal{E}). 
			\end{align*}

			Dividing out by the binomial terms, we have obtain a polynomial whose coefficients are log concave. Note that we can still obtain a constant even if $k < \rk \mathcal{E}$. In the context of conformal blocks, this applies with $\mathcal{E} = \mathcal{V}(\mathfrak{g}, \nu_1, \ell_1)$ and $\mathcal{L} = \mathbb{V}(\mathfrak{g}, \mu_1, m_1)$ if the latter is a line bundle. The left hand side would then compute $c_i(\mathbb{V}(\mathfrak{g}, \nu + t\mu_1, \ell_1 + tm_1)$. Note that there are examples in terms of conformal blocks where the left hand side has rank smaller than $k$ (which is needed for this to be nontrivial). When $k = \rk \mathcal{E}$, the left hand side is actually a sort of resultant of $\mathcal{E}$ and $\mathcal{L}^{\otimes t}$ after taking the dual of one of the two bundles by Proposition 12.1 on p. 428 of \cite{EH}). This can be taken with respect to the Chern roots or something more like a ``usual'' resultant where the variable is the index ($i$ in $c_i$). \\
			
			As for volumes, let $A$ and $B$ be nef line bundles on a smooth projective variety $X$ (e.g. conformal blocks on $\overline{\mathcal{M}_{0, n}}$) with $B$ globally generated, Note that \[ \sum_{i = 0}^d  \binom{d}{i}c_1(A)^{d - i} c_1(B)^i t^i = (c_1(A) + c_1(B)t)^d = \deg(c_1(A) + c_1(B)t).  \] If $X$ were a projective toric variety, this would be $d!$ multiplied by the volume of a polytope (e.g. see p, 277 of \cite{Kav} which includes projective spherical varieties and Theorem 4.1 on p. 278 of \cite{Kav} on the subring of the cohomology ring gnerated by Chern classes of line bundles). When we consider $S_n$-equivariant elements of the Picard group, the Chern classes of line bundles are conjectured to generate $\Pic \overline{\mathcal{M}_{0, n}}^{S_n}$ and this is known for $n \le 2000$ (p. 11 of \cite{GMu}) Note that this is \emph{not} the case for $A^1(\overline{\mathcal{M}_{0, n}})^{S_n}$.

		\end{enumerate}
		
	\end{proof}
	
	Here are some examples for small $n$.
	
	\begin{exmp}
		 If we take $n = 4$ and consider the intersection with a generic hyperplane (or whatever very ample or nef line bundle is used to define the degree in \cite{Fak} and \cite{GMu}), Corollary 2.7 on p. 7 and Corollary 3.5 on p. 14 of \cite{Fak} imply that substituting in $\mathbb{V}_1 + t \mathbb{V}_2$ for conformal blocks $\mathbb{V}_1$ and $\mathbb{V}_2$ gives a polynomial in $t$ that has polynomials in ranks of conformal blocks using Theorem 3.1 on p. 3 of \cite{GMu}. For example, suppose that $n = 5$. Then, we can use the following to compute the polynomial in terms of the ranks explicitly:
		
		By equation (6) on p. 5 of \cite{GMu}, we have
		
		\[ c_1(\mathbb{V}) = \rk(\mathbb{V})  \sum_{i = 1}^5 \sum_{i = 1}^n w(\lambda_i) \psi_i - \sum_{I \subset [n], 1 \notin I, \mu_e \in \mathcal{P}_\ell(\mathfrak{g}) w(\mu_e)} \rk(\mathbb{V}_{\mu_e}(\lambda_l)) \rk(\mathbb{V}_{\mu_e^*}(\lambda_{I^c})) D_I.  \]

		Explicit computations in the Chow ring using $\psi_i = \sum_{i, m \in I, j, k \notin I} D_I$ for fixed $j, k \ne i$ (e.g. see Example 4.8 on p. 7 -- 8 of \cite{GMu}) imply relations such as those listed below:
		
		\begin{itemize}
			\item $\psi_i^2 = 1$ for $1 \le i \le 5$, $\psi_i \cdot \psi_j = 1$ for $1 \le i \ne j \le 4$ (can take a special case of top degree monomials in $\psi$-classes being equal to multinomial coefficients determined by the dimension and the exponents)
			
			\item $\psi_5 \cdot \psi_i = 2$ for $1 \le i \le 4$
			
			\item $\psi_5 \cdot D_{i, j} = 1$ for $i \ne j$ and $i, j \ne 5$
			
			\item $D_{i, j}^2 = -1$ if $i, j \ne 5$
			
			\item $D_{i, 5}^2 = -1$ if $i \ne 5$
			
			\item $D_{1, i} \cdot D_{j, 5} = 1$ if $i \ne j$ and $i, j \ne 1, 5$ and $D_{1, i} \cdot D_{i, 5} = 0$ for $i \ne 1, 5$
			
			\item $D_{1, 5} \cdot D_{i, j} = 1$ if $i, j \ne 1, 5$

		\end{itemize}

	\end{exmp}
	
	\begin{rem} \label{confrec}
		The sequences involved also have a \emph{recursive} structure coming from the Chern classes of these bundles. As a comparison, note that $c_k = \sum_{i \le k} a_i b_{k - i}$ forms a log concave sequence if $(a_i)$ and $(b_j)$ are finite log concave sequences. For conformal blocks in general, the forgetful morphisms $f_i  : \overline{\mathcal{M}_{0, n}} \longrightarrow \overline{\mathcal{M}_{0, n - 1}}$ and gluing morphisms $\gamma : \overline{\mathcal{M}_{0, n_1 + 1}} \times \overline{\mathcal{M}_{0, n_2 + 1}} \longrightarrow \overline{\mathcal{M}_{0, n_1 + n_2}}$ and $\gamma_2 : \overline{\mathcal{M}_{0,n + 2}} \longrightarrow \overline{\mathcal{M}_{1, n}}$  yield expressions for the pullback of the conformal blocks of these morphisms in terms of direct sums of tensor products or direct sums of the original conformal blocks on a smaller number of points given by \[  \mathbb{V}_{0, n, \overline{\lambda}} \cong f_i^*(\mathbb{V}_{g, n, \overline{\lambda}'}) \] for $\overline{\lambda}$ with $\lambda_i = 0$ and $\overline{\lambda}'$ obtained by removing 0, \[ \gamma^*(\mathbb{V}_{g_1 + g_2, n_1 + n_2, \overline{\lambda}}) \cong \bigoplus_{\mu \in P_\ell} \mathbb{V}_{g_1, n_1 + 1, \overline{\lambda}^1 \mu} \otimes \mathbb{V}_{g_2, n_1 + 1, \overline{\lambda}^2 \mu } \] and \[ \gamma_2^*(\mathbb{V}_{g, n \overline{\lambda}}) \cong \bigoplus_{\mu \in P_\ell} \mathbb{V}_{g - 1, n + 2, \overline{\lambda \mu}} \] respectively by Proposition 2.4 on p. 5 -- 6 of \cite{Fak}. Note that all the bundles involved are globally generated. For terms of the form $c_1(A)$ for some conformal block $A$, tensor products are given by linear combinations of $c_1$  of the original vector bundles with coefficients given by the ranks of the bundles (which are the (quantum) Littlewood-Richardson coefficients mentioned above). For conformal blocks satsifying a ``critical condition'' (with respect to being a nef divisor), the Chern classes are of the form $c_k(\mathbb{V})= c_1(\mathbb{V})^k$. In particular, this applies to the generating set of $\Pic \overline{\mathcal{M}_{0, n}}$ mentioned above. In terms of Chern classes, the resulting recursions are (alternating) sums of products of Chern classes smaller degree of conromal blocks formed from ``smaller'' initial parameters. 
	\end{rem}
	
	\subsection{Higher Hodge--Riemann relations and the geometry of $\overline{\mathcal{M}_{0, n}}$} \label{m0nhrhigh}

	Returning to the conformal blocks used to analyze $S_n$-equivariance and log concave sequences, we find that higher Hodge--Riemann relations can be connected to the geometry of $\overline{\mathcal{M}_{0, n}}$. Although quantum cohomology often fails to satisfy various functorial properties, this is what we will use to relate the Hodge--Riemann properties (of some Artinian Gorenstein algebra) to that of $\overline{\mathcal{M}_{0, n}}$. Given a Toeplitz matrix (i.e. one with same entries in each diagonal -- see Definition 4.3 on p. 12 of \cite{MMMSW}), whether it is totally positive can be determined by whether a certain algebra satisfies the Hodge--Riemann property (Theorem 4.19 on p. 21 of \cite{MMMSW}). For the subset of Toeplitz matrices which are lower triangulat, we can relate this to the quantum cohomology of (partial) flag varieties (e.g. Grassmannians). In particular, the functions on the space of lower diagonal Toeplitz matrices (possibly satisfying some properties as in Section 8 of \cite{Rie}) are determined by the quantum cohomology of a Grassmannian (via quantum parameters involvedin partial flag varieties generalizing work of Konstant on full flag varieties). \\
	
	Throughout \cite{Rie}, a major theme is an isomorphism between coordinate rings of strata of Bruhat decompositions of certain varieties with the quantum cohomology of a quotient of by a parabolic subgroup generalizing results of Peterson (Theorem 4.2 on p. 374 of \cite{Rie}). Note that the total positivity of the Toeplitz matrices is equivalent to the positivity of (quantum) Schubert classes on appropriate parameters (part 3 of Theorem 7.2 on p. 380 of \cite{Rie}). The connection back to the geometry of $\overline{\mathcal{M}_{0, n}}$ comes from the relation between conformal blocks on $\overline{\mathcal{M}_{0, n}}$ and Gromov-Witten invariants and quantum cohomology on the Grassmannian (e.g. cohomological form of Witten's dictionary connecting ranks of conformal blocks on $\overline{\mathcal{M}_{0, n}}$ and the coefficients of a basis of quantum Schubert classes on Grassmannians in Theorem 2.2 on p. 8 of \cite{CGHKLX}). Taking the structure constants on multiplications of Schubert classes to be ranks of conformal blocks under suitable conditions on the partitions involved (see Theorem 2.2 on p. 8 of \cite{CGHKLX}), we can state the following \\

	\begin{prop} \label{toephrm0n}
		For lower triangular Toeplitz matrices, Hodge--Riemann relations on Artinian Gorenstein algebras varying with the matrices determine totally/Schubert positive parts of the variety of such Toeplitz matrices. Connecting this back to the quantum cohomology of the Grassmannian, there is a basis of functions determining coefficients which come from ranks of conformal blocks on $\overline{\mathcal{M}_{0, n}}$. \\
	\end{prop}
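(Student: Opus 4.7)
The plan is to chain together three dictionaries already stated in the excerpt and in the cited literature: (i) the total positivity criterion for lower triangular Toeplitz matrices in terms of the Hodge--Riemann property of an associated Artinian Gorenstein algebra (Theorem 4.19 of \cite{MMMSW}); (ii) Rietsch's isomorphism between coordinate rings of Bruhat strata and quantum cohomology of $G/P$ (Theorem 4.2 of \cite{Rie}); and (iii) the cohomological form of Witten's dictionary expressing ranks of conformal blocks on $\overline{\mathcal{M}_{0,n}}$ as quantum Littlewood--Richardson numbers (Theorem \ref{witdict}). The proposition is essentially a formal consequence of combining these three results, once one verifies that their indexings line up.

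First, given a lower triangular Toeplitz matrix $T$ parametrized by its diagonal entries $(t_0, t_1, \dots, t_{n-1})$, I would construct the Artinian Gorenstein algebra $A_T$ of \cite{MMMSW} varying algebraically with $T$, and apply Theorem 4.19 of \cite{MMMSW} to obtain the equivalence between total positivity of $T$ and the Hodge--Riemann property of $A_T$. By part 3 of Theorem 7.2 of \cite{Rie}, total positivity for these matrices is the same as positivity in the quantum Schubert basis, which already yields the first clause of the proposition.

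Next, I would apply Rietsch's isomorphism to identify the coordinate ring of the Bruhat stratum containing the lower triangular Toeplitz variety with the quantum cohomology $QH^\ast(\mathrm{Gr}(k, n))$ of a Grassmannian. Under this identification, the natural basis of coordinate functions on the Toeplitz side corresponds to the quantum Schubert basis, so the structure constants for multiplication of basis elements are exactly the quantum Littlewood--Richardson coefficients $c^{\mu, d}_{\lambda^\cdot}$. Finally, Theorem \ref{witdict} expresses each such coefficient as the rank of a conformal block $\mathbb{V}(\mathfrak{sl}_{r+1}, \lambda^\cdot, \ell)$ on $\overline{\mathcal{M}_{0,n}}$, which produces the basis of functions claimed in the proposition and closes the loop.

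The main obstacle I expect is matching the combinatorial parameters across the three frameworks: specifically, aligning Rietsch's parabolic $P$ and its quantum parameter $q$ with the pair $(r, \ell)$ and shift $s$ indexing the conformal blocks in Theorem \ref{witdict}, while simultaneously making sure the resulting quantum Schubert indexing matches the diagonal indexing $(t_0, \dots, t_{n-1})$ of the Toeplitz matrix used in \cite{MMMSW}. A secondary concern is checking that $A_T$ depends on $T$ in a way compatible with Rietsch's algebraic structure so the HR locus pulls back to a well-defined subvariety; this should follow from the explicit description of the algebras in Section 4 of \cite{MMMSW}. Once the dictionary of parameters is fixed, no further geometric input beyond the three cited theorems is needed.
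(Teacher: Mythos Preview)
Your proposal is correct and follows essentially the same approach as the paper: the paper's justification is the discussion immediately preceding the proposition, which likewise chains together Theorem 4.19 of \cite{MMMSW}, Rietsch's Theorems 4.2 and 7.2 in \cite{Rie}, and the cohomological Witten dictionary (Theorem \ref{witdict} / \cite{CGHKLX}). Your additional care about aligning the parameters $(r,\ell,s)$ with Rietsch's parabolic data and the Toeplitz indexing is a welcome refinement over the paper's more informal treatment, but no new ingredient is involved.
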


	Since the Schubert classes referred to above are hook length formulas, we have shown that a nonnegativity of a certain family of hook length formulas depends on higher Hodge--Riemann relations of auxiliary Artinian Gorenstein algebras. Apart from this, we can also look at relations between log concave sequences. For example, this can make use of recursive relations bewteeen involving Chern classes of conformal  blocks (e.g. Proposition 4.4 on p. 7 of \cite{GM1}). Taking $c_i(V)$ to lie in the Chow ring $A^\cdot(\overline{\mathcal{M}_{0, n}})$, we can consider how to translate these recursions in terms of the geometry of the fan determined by the simplicial complex $\Delta$ parametrizing phylogenetic trees (where facets correspond to trivalent trees and smaller subfaces are obtained by contractions of internal edges) and how these are conected to Hodge-Riemann relations and earlier computations we made or combinatorial work related to $\psi$ and $\omega$-divisors on $\overline{\mathcal{M}_{0, n}}$ (e.g. \cite{GGL1}). \\

	\color{black}

\end{document}